\numberwithin{equation}{section}
\theoremstyle{plain}
\newtheorem{thm}{Theorem}[section]
\newtheorem{lemma}[thm]{Lemma}
\newtheorem{prop}[thm]{Proposition}
\newtheorem{alg}[thm]{Algorithm}
\newtheorem{open}[thm]{Question}
\theoremstyle{definition}
\newtheorem{defn}[thm]{Definition}
\newtheorem{remark}[thm]{Remark}
\newtheorem{example}[thm]{Example}
\newcommand{\Z}{\mathbb {Z}}
\newcommand{\mbf}{\mathbf}
\DeclareMathOperator{\im}{im}
\DeclareMathOperator{\coker}{coker}
\author{Michael Ion} 
\address{School of Information, University of Michigan \\
Ann Arbor, Michigan 48104, USA}
\email{mikeion@umich.edu}
\title{Chip-Firing and the Sandpile Group of the $R_{10}$ Matroid}
\author{Alex McDonough}
\address{Department of Mathematics, University of Oregon\\
Eugene, Oregon 97403, USA}
\email{alexmcd@uoregon.edu}
\date{\today}
\begin{document}

\begin{abstract}A celebrated result of Seymour is that all \emph{regular matroids} are built up from \emph{graphic matroids}, \emph{cographic matroids}, and a specific 10 element rank 5 matroid called $R_{10}$. In this article, we give a simple description of \emph{chip-firing} on $R_{10}$ using complex numbers on the vertices of a pentagon, and link to an app where readers can play around with the combinatorial dynamics of the system. We also provide an easy to describe set of representatives for each of the 162 equivalence classes that make up the \emph{sandpile group} of $R_{10}$. \end{abstract}

\maketitle

\section{Introduction}

Several foundational results in \emph{chip-firing}, most notably the \emph{matrix-tree theorem}, have been shown to generalize naturally from graphs to \emph{regular matroids}~\cite{Merino}. This is possible through the perspective of \emph{circuit} and \emph{cocircuit} lattices. However, we do not know of any examples in the literature which focus on chip-firing properties of \emph{regular matroids} as more than a special case of \emph{graphic matroids}. 

In 1980, Seymour showed that every regular matroid can be formed from \emph{1-sums}, \emph{2-sums}, and \emph{3-sums} of graphic matroids, co-graphic matroids, and a specific matroid called $R_{10}$~\cite{Seymour}. For this reason, we suspect that understanding chip-firing on $R_{10}$ could be the key to understanding nuances of chip-firing on regular matroids in general. 

The term \emph{chip-firing} can mean different things to different authors, but it generally refers to a discrete dynamical system that can be described in terms of a \emph{configuration} of objects (often visualized as poker chips) which can be rearranged through operations called \emph{firing moves}. See~\cite{Klivans} for many more details about the history and applications of chip-firing.

For this article, we use chip-firing to understand equivalence classes of the \emph{sandpile group} of $R_{10}$. The \emph{sandpile group} of a graph, (which also goes by many other names, such as the \emph{critical group}, or \emph{Jacobian}) is a finite abelian group whose cardinality is equal to the number of spanning trees~\cite{Biggs99}. The sandpile group of a \emph{regular matroid} is a finite abelian group whose cardinality is equal to the number of \emph{bases}~\cite{Merino}. To compute the sandpile group of a regular matroid, we take the quotient of the free abelian group on the ground set by an equivalence relation given by a specific collection of firing moves. 

Using linear algebra, one can show that the sandpile group of $R_{10}$ is equal to $(\Z/3\Z)^4 \oplus (\Z/2\Z)$, a finite abelian group of order 162. Furthermore, the matroid $R_{10}$ is equipped with certain symmetries that allow for an elegant chip-firing description using both \emph{real} and \emph{imaginary} chips on a pentagon. More formally, we can work over the ring of \emph{Gaussian integers} $\Z[i]$ instead of the ring of integers $\Z$. Using this description, we give a set of representatives for all of the equivalence classes of $S(R_{10})$, as well as an algorithm from an arbitrary configuration to one of these representatives. Specifically, we show that every firing equivalence class has a unique representatives with 0 or 3 real chips on a distinguished node, 0, 1, or 2 real chips on each of four other nodes, and no imaginary chips. 

We also designed a web app to allow people to better understand the dynamics of the system through interactive exploration. The app can be found here: \url{https://pentagon-game.github.io/}, and a screenshot is shown in Figure~\ref{fig:app_example}. Both authors found the app to be incredibly helpful as a tool to build intuition, and we hope that readers will feel the same way. 

\section{Background} 

For this article, we do not need the full generality of matroid theory. Instead, we can restrict to \emph{regular matroids}, which can be described in terms of a matrix.

Let $A$ be a matrix, $n$ be the number of columns of $A$, and $F$ be a field. The \emph{ground set} $E$ is a set whose elements correspond to the columns of $A$. If a subset of $E$ corresponds to columns which are linearly independent over $F$, then it is called an \emph{independent} set. The maximal independent sets are called \emph{bases}, and the collection of bases is called $\mathbf B$.

A matroid is \emph{representable} if it can be represented by some matrix over some field. The classical definition of a \emph{regular matroid} is a matroid that is representable over \emph{any} field. We will use the following definition, which was proven equivalent by Tutte~\cite{Tutte}. 
\begin{defn}
    A \emph{regular matroid} is a matroid that is representable over $\mathbb R$ by a \emph{totally unimodular} matrix $A$, where a \emph{totally unimodular} matrix is an integer matrix such that every minor is in the set $\{0, 1,-1\}$. 
\end{defn}

It is well known that any graph $G$ corresponds to a regular matroid $M(G)$, whose ground set corresponds to the edges of $G$, and whose bases correspond to \emph{spanning trees} of $G$~\cite[Section 5.1]{Oxley}. However, the other direction does not hold: there exist regular matroids whose bases do not correspond to the spanning trees of a graph. One such matroid is $R_{10}$, which will be the main focus of Section~\ref{sec:R10}. 

Let $M = (E,\mbf B)$ be a regular matroid represented by a totally unimodular matrix $A$ over $\mathbb R$. Suppose that we assign an integer number of \emph{chips} to each element of the ground set of $E$.  We call such an assignment a \emph{chip configuration}, and represent it as an element of the lattice $\Z^E$.

Let $\im_{\Z} A^t$ be the sublattice of $\Z^E$ made up of integer linear combinations of rows of $A$ (which are also the columns of $A^t$). Let $\ker_{\Z} A$ be the sublattice of $\Z^E$ made up of vectors mapped to the all zeros vector by $A$. 

\begin{defn}\label{def:sandpile}
    Suppose that $M$ is a matroid represented by a totally unimodular matrix $A$ over $\mathbb R$. The \emph{sandpile group} of $M$ is given by:
    \[S(M) := \frac{\Z^E}{\im_ {\Z} A^t\oplus \ker_ {\Z} A}\]
\end{defn}

Note that Definition~\ref{def:sandpile} does not meaningfully depend on the choice of totally unimodular matrix $A$ representing $M$, beyond reordering or reversing the orientation of the elements of the ground set. For further discussion, see~\cite[Section 2.1]{BBY} and~\cite{SuWagner}. 

\begin{remark}\label{rem:names}
    There are a lot of different names for the group given in Definition~\ref{def:sandpile}, and these names are not always used interchangeably. For example, the main group considered in~\cite{BBY} is called the \emph{Jacobian group}, which is defined as the \emph{determinant group} of $\im_ {\Z} A^t$. Nevertheless, this group is shown to be isomorphic to the group in Definition~\ref{def:sandpile}. In~\cite{CutsFlows}, the authors generalize to work with \emph{representable matroids} which may not be regular. In this context, Definition~\ref{def:sandpile} is called the \emph{cutflow group}, while the determinant group of $\im_{\Z} A^t$ is called the \emph{critical group}. In our context, these two groups are isomorphic, so we do not make a distinction in our naming. The name \emph{sandpile group} comes from the Bak--Tang--Wiesenfeld \emph{sandpile model} of \emph{self-organized criticality}~\cite{BTW}.
\end{remark}

\begin{remark}\label{rem:mtt}
    To keep this article concise, we will focus on \emph{how} to describe the sandpile group of a regular matroid (in particular, of $R_{10}$), and will say very little about \emph{why} this group is interesting. Nevertheless, we do feel it is worth mentioning one important property of this group. For any regular matroid $M$, the size of the sandpile group $S(M)$ is equal to the number of bases of $M$~\cite[Theorem 3.3.2]{Merino}. This result is a variation of \emph{Kirchhoff's matrix-tree theorem} and can be proven by applying the \emph{Cauchy-Binet formula}. 
\end{remark}

It can be a bit cumbersome to work with both the image and the kernel of a matrix, but there is a trick to rewrite $\im_ {\Z} A^t\oplus \ker_ {\Z} A$ as the image of a larger matrix. Throughout this article, we use $n$ for the size of the ground set of a regular matroid $M$, and $r$ for the \emph{rank} of $M$, which is equal to the minimal number of rows of a matroid representing $M$.

Suppose that an $r \times n$ totally unimodular matrix $A$ is of the form \begin{equation}\label{eq:stand_form} A = \begin{bmatrix} I_r & D\end{bmatrix},\end{equation}
where $I_r$ is the $r\times r$ identity matrix and $D$ is an $r \times (n-r)$ integer matrix. Note that $A$ can always be put in this form by choosing a basis for the first $r$ columns, and then applying row operations. Then, we define \begin{equation}\label{eq:dual_form}\widehat A = \begin{bmatrix} D^t & -I_{n-r}\end{bmatrix},\end{equation} where $D^t$ is the transpose of $D$ and $I_{n-r}$ is again the identity matrix. Additionally, we define \begin{equation}\label{eq:combo_form}K = \begin{bmatrix} A \\ \widehat A \end{bmatrix} = \begin{bmatrix}I_r & D \\D^t & -I_{n-r}\end{bmatrix},\end{equation}

It is well-known that the matroid represented by $\widehat A$ is \emph{dual} to the matroid represented by $A$~\cite[Theorem 2.2.8]{Oxley}. This implies that $\ker_{\Z}  A = \im_{\Z} \widehat A^t$. It follows that if $A$ represents a regular matroid $M$, then 
\begin{equation}\label{eq:sandpile_def}S(M) = \frac{\Z^E}{\im_ {\Z} A^t\oplus \ker_ {\Z} A} = \frac{\Z^E}{\im_ {\Z} A^t\oplus \im_ {\Z} \widehat A^t} = \frac{\Z^E}{\im_{\Z}K^t} = \frac{\Z^E}{\im_{\Z}K}\hspace{.2 cm}\text{, where } K = \begin{bmatrix} I_r & D \\ D^t & -I_{n-r}\end{bmatrix}.\end{equation}

This quotient group $\Z^E/\im_{\Z}K$ is called the \emph{cokernel} of $K$ over $\mathbb Z$, and is written $\coker_\Z K$. The following proposition gives a summary of our observations after Remark~\ref{rem:names}. 

\begin{prop}~\cite[Definition 3.11]{Multijection}
     Let $M$ be a regular matroid with ground set $E$ that is represented over $\mathbb R$ by a totally unimodular matrix $A = \begin{bmatrix} I_r & D\end{bmatrix}$. Furthermore, define $\widehat A= \begin{bmatrix} D^t & -I_{n-r}\end{bmatrix}$ and $K = \begin{bmatrix} A \\ \widehat A\end{bmatrix} = \begin{bmatrix} I_r & D \\ D^t & -I_{n-r}\end{bmatrix}$. The \emph{sandpile group} $S(M)$ is equal to $\coker_{\mathbb Z} K$ (which is defined as $\Z^E/\im_{\Z} K$). 
\end{prop}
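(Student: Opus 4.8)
The plan is to recognize that this Proposition simply repackages Definition~\ref{def:sandpile}, so that the entire content is the chain of identifications already gestured at in~\eqref{eq:sandpile_def}. I would prove it by rewriting the defining quotient one sublattice at a time, isolating the two inputs that do the real work: the matroid duality relation $\ker_{\Z} A = \im_{\Z}\widehat A^t$, and the symmetry of the block matrix $K$.

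First I would start from Definition~\ref{def:sandpile}, which gives $S(M) = \Z^E/(\im_{\Z} A^t \oplus \ker_{\Z} A)$. The first move is to replace the kernel lattice by an image lattice using $\ker_{\Z} A = \im_{\Z}\widehat A^t$, with $\widehat A$ as in~\eqref{eq:dual_form}. For the standard form $A = [\,I_r \mid D\,]$ this can be checked by a one-line direct computation rather than by appealing to abstract duality: a vector $(x_1,x_2)\in\Z^r\times\Z^{n-r}$ lies in $\ker_{\Z} A$ iff $x_1 = -Dx_2$, so $\ker_{\Z} A = \{(-Dx_2, x_2) : x_2\in\Z^{n-r}\}$, and since $\widehat A^t = \bigl[\begin{smallmatrix} D \\ -I_{n-r}\end{smallmatrix}\bigr]$ we have $\im_{\Z}\widehat A^t = \{(Dy,-y): y\in\Z^{n-r}\}$; taking $y=-x_2$ shows the two lattices coincide. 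This yields $S(M) = \Z^E/(\im_{\Z} A^t + \im_{\Z}\widehat A^t)$.

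Next I would identify the combined sublattice with an image of $K$. The sublattice $\im_{\Z} A^t + \im_{\Z}\widehat A^t$ is generated by the rows of $A$ together with the rows of $\widehat A$, which are precisely the rows of $K = \bigl[\begin{smallmatrix} A \\ \widehat A\end{smallmatrix}\bigr]$ from~\eqref{eq:combo_form}; hence this sublattice equals $\im_{\Z} K^t$. The clean observation that finishes the argument is that $K$ is symmetric: because $K = \bigl[\begin{smallmatrix} I_r & D \\ D^t & -I_{n-r}\end{smallmatrix}\bigr]$ has symmetric diagonal blocks and off-diagonal blocks that are mutual transposes, $K^t = K$, so $\im_{\Z} K^t = \im_{\Z} K$ and therefore $S(M) = \Z^E/\im_{\Z} K = \coker_{\Z} K$.

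I expect the only genuine subtlety to be the duality relation $\ker_{\Z} A = \im_{\Z}\widehat A^t$; everything else is bookkeeping, and the symmetry $K = K^t$ is what lets us pass from the naturally-arising row lattice $\im_\Z K^t$ to the column lattice $\im_\Z K$ used to define the cokernel. Two minor points I would note for completeness: first, that the $\oplus$ in Definition~\ref{def:sandpile} is a genuine internal direct sum, since the row space and null space of $A$ are orthogonal complements over $\mathbb R$ and so meet only at $0$, although this is not strictly needed because the quotient depends only on the submodule generated; and second, that working over $\Z$ is exactly where total unimodularity enters, guaranteeing that the standard form $A = [\,I_r\mid D\,]$ is reachable by integral row operations (the chosen basis columns form a submatrix of determinant $\pm 1$) so that the lattices are preserved.
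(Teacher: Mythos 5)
Your proof is correct and takes essentially the same route as the paper, which states this proposition as a summary of the chain of equalities in~\eqref{eq:sandpile_def}: replace $\ker_{\Z}A$ by $\im_{\Z}\widehat A^t$, identify the combined sublattice with $\im_{\Z}K^t$, and use the symmetry $K = K^t$ to pass to $\im_{\Z}K$. The only difference is that where the paper invokes matroid duality~\cite[Theorem 2.2.8]{Oxley} to justify $\ker_{\Z}A = \im_{\Z}\widehat A^t$, you verify that identity by the direct computation $\ker_{\Z}A = \{(-Dx_2,\,x_2) : x_2 \in \Z^{n-r}\} = \im_{\Z}\widehat A^t$, which is a sound and self-contained substitute made possible by the standard form $A = \begin{bmatrix} I_r & D\end{bmatrix}$.
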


\begin{remark}
    It is common to use $-\widehat A$ in place of $\widehat A$. In fact, this is the version used in both~\cite[Theorem 2.2.8]{Oxley} and \cite[Definition 3.11]{Multijection}. However, the setup from~\eqref{eq:combo_form} has a significant advantage for our purposes: the matrix $K$ is \emph{symmetric}. This property prevents potential confusion between the role of rows and columns of $K$, and allows us to write $K$ in places where we would otherwise write $K^t$. Note that $\im_{\mathbb Z} K^t$ does not change when $\widehat A$ is replaced with $-\widehat A$. 
\end{remark}

One way to think of the sandpile group of $M$ is in terms of \emph{chip configurations} and \emph{firings moves}. Begin by thinking of an element $C \in \Z^E$ as an assignment of an integer number of \emph{chips} assigned to each element of $E$. If we add or subtract a row of $K$, this redistributes the chips to give a new chip configuration $C'$ such that $C$ and $C'$ are equivalent as elements of $S(M)$. We say that adding or subtracting a row of $K$ is a \emph{firing move} on the chip configuration, and we say that two chip configurations are \emph{firing equivalent} if one can be reached from the other by a sequence of firing moves. Algebraically speaking, chip configurations $C$ and $C'$ are firing equivalent if and only if $C - C' \in \im_{\Z} K$. Equivalently, this holds precisely when $K^{-1}(C-C')$ is an integer vector. 

\begin{remark}\label{rem:traditional}
    Typically, \emph{chip configurations} refer to a collection of chips on the vertices of a graph, instead of on the ground set of a regular matroid. In this context, a \emph{firing move} removes $\deg(v)$ chips from a vertex $v$, and sends one to each neighbor of $v$. It is also common to disallow firings which result in a negative number of chips on the firing vertex. For more information about this more traditional perspective, see~\cite{Klivans}. 
    
    In our construction, firings are more abstract, and the total number of chips is not necessarily preserved. Furthermore, we place no restrictions on when a firing is allowed, and it is not a problem for elements of $E$ to have negative chips. 
\end{remark}

Using linear algebra techniques, we can calculate $S(M)$ precisely. 

\begin{prop}\cite[Section 8]{Biggs99}\label{prop:smith_normal}
    For any square integer matrix $K$, the group $\coker_{\Z} K$ has cardinality equal to $|\det(K)|$. More precisely, let $a_1,a_2, \dots, a_n$ be the diagonal entries of the \emph{smith normal form} of $K$. Then, $\coker_{\Z}K \cong (\Z/{a_1}\Z) \oplus (\Z/{a_2}\Z) \oplus \cdots \oplus (\Z/{a_n}\Z)$.
\end{prop}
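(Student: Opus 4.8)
The plan is to reduce the general statement to the case of a diagonal matrix via the existence of the \emph{Smith normal form}, and then compute the cokernel of a diagonal matrix by inspection. Recall that the Smith normal form theorem guarantees integer matrices $U$ and $V$, each invertible over $\Z$ (that is, with determinant $\pm 1$), such that $UKV = \operatorname{diag}(a_1, a_2, \dots, a_n)$, where the $a_i$ are the diagonal entries named in the statement. Since this existence result is essentially what is being cited, I would take it as given and concentrate on transferring information between $\coker_\Z K$ and $\coker_\Z(UKV)$.

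The key lemma is that multiplying $K$ on either side by a matrix invertible over $\Z$ does not change the isomorphism type of the cokernel. For right multiplication, note that if $V$ is invertible over $\Z$ then $V\Z^n = \Z^n$, so $\im_\Z(KV) = K(V\Z^n) = K\Z^n = \im_\Z K$; the image, and hence the cokernel, is literally unchanged. For left multiplication, the automorphism $U \colon \Z^n \to \Z^n$ carries $\im_\Z K$ onto $\im_\Z(UK) = U(\im_\Z K)$, and therefore descends to an isomorphism $\Z^n/\im_\Z K \xrightarrow{\sim} \Z^n/\im_\Z(UK)$. Combining the two observations yields $\coker_\Z K \cong \coker_\Z(UKV)$.

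It then remains to analyze the diagonal matrix $S = UKV = \operatorname{diag}(a_1, \dots, a_n)$. Its image is the sublattice $a_1\Z \oplus \cdots \oplus a_n\Z$ of $\Z^n$, so the quotient splits coordinate by coordinate, giving
\[
\coker_\Z S \;\cong\; (\Z/a_1\Z) \oplus (\Z/a_2\Z) \oplus \cdots \oplus (\Z/a_n\Z),
\]
which is the claimed decomposition. For the cardinality statement, each summand $\Z/a_i\Z$ has order $|a_i|$ (assuming every $a_i \neq 0$, equivalently $\det K \neq 0$), so $|\coker_\Z K| = \prod_i |a_i| = |\det S|$. Finally, $|\det S| = |\det U|\,|\det K|\,|\det V| = |\det K|$, since $U$ and $V$ have determinant $\pm 1$.

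I expect the only genuine obstacle to be the existence of the Smith normal form itself, whose proof amounts to the repeated application of the Euclidean algorithm to matrix entries through integer row and column operations; since the statement cites this as known, the remaining steps are routine. The one subtlety worth stating carefully is the left-multiplication step, where the images $\im_\Z K$ and $\im_\Z(UK)$ genuinely differ as subsets of $\Z^n$, so one must invoke the induced isomorphism of quotients rather than an equality of images.
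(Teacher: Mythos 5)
Your proof is correct and complete: the paper offers no argument of its own for this proposition (it simply cites Biggs), and your reduction via unimodular matrices $U$ and $V$ to the diagonal case, together with the observation that left and right multiplication by $\Z$-invertible matrices preserves the cokernel up to isomorphism, is precisely the standard argument behind that citation. The one caveat you flag yourself --- that the cardinality claim $|\coker_{\Z} K| = |\det(K)|$ requires $\det K \neq 0$ --- is handled correctly, and is indeed implicit in the paper's statement.
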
 

In the next section, we will focus on specific properties of $R_{10}$, but first let us give a simple example of chip-firing on a regular matroid. 

\begin{example}
    Let $M$ be the regular matroid represented by the matrix 
    \[A = \begin{bmatrix} 1 & 0 & -1 \\ 0 & 1 & -1\end{bmatrix}.\hspace{ .3 cm} \text{ This means that } \widehat A = \begin{bmatrix} -1 & -1 & -1\end{bmatrix}\hspace{ .3 cm} \text{ and } \hspace{ .3 cm} K = \begin{bmatrix} 1 & 0 & -1 \\ 0 & 1 & -1 \\ -1 & -1 & -1\end{bmatrix}.\]
    By computing the Smith normal form of $K$, one can calculate that $S(M) = \Z/3\Z$, which means that there are 3 firing equivalence classes of chip configurations. 
    
    A chip configuration on $M$ can be written as a triple of integers $(a,b,c)$. We claim that each chip configuration is firing equivalent to exactly one of the configurations $(0,0,0)$, $(1,0,0)$, or $(2,0,0)$. In other words, these form a set of representatives for the elements of $S(M)$. One way to prove this claim is through linear algebra, noting that chip configurations $C$ and $C'$ are firing equivalent if and only if $K^{-1}(C-C') \in \Z^3$. 

    Alternatively, we can also prove the claim by showing how to go from an arbitrary configuration to one of the three representatives using a sequence of firings. In particular, suppose that we are given an arbitrary chip configuration $(a,b,c)$. First, we can apply the $(-1,-1,-1)$ firing move a total of $\lfloor(a+b+c)/3\rfloor$ times. Since each firing move subtracts 3 chips, the resulting configuration will have a total of $0$, $1$, or $2$ chips. Next, we can add or subtract $(0,1,-1)$ until no chips remain in the second position. Finally, we can add or subtract $(1,0,-1)$ until no chips remain in the third position. These firing moves do not change the total number of chips, so we end up with $0$, $1$, or $2$ chips in the first position, and no chips on the other two positions. 
\end{example}

\section{Chip-firing on $R_{10}$}\label{sec:R10} 

Consider the totally unimodular matrix
\begin{equation}\label{eq:R10}\mathcal A = \begin{bmatrix}
    1 & 0 & 0 & 0 & 0 &   1 & -1 & 0 & 0 & -1\\
    0 & 1 & 0 & 0 & 0 &   -1 & 1 &-1 & 0 & 0\\
    0 & 0 & 1 & 0 & 0 &   0 & -1 & 1 & -1 & 0\\
    0 & 0 & 0 & 1 & 0 &   0 & 0 & -1 & 1 & -1\\
    0 & 0 & 0 & 0 & 1 &   -1 & 0 & 0 & -1 & 1\\
\end{bmatrix}.\end{equation}

\begin{defn}\label{def:R10}
    The matroid represented by $\mathcal A$ over $\mathbb R$ is called $R_{10}$. 
\end{defn}

We can write $\mathcal A$ as the block matrix $\begin{bmatrix} I_5 & \mathcal D \end{bmatrix}$, where $\mathcal D$ is the $5 \times 5$ matrix defined by:

\[\mathcal D_{ij} = \begin{cases} 1 & \text{if $i=j$,}\\
-1 &\text{if $i-j \equiv 1\pmod 5$,}\\
0 & \text{otherwise.}\end{cases}\]

Since $\mathcal A$ is in the format from~\eqref{eq:stand_form}, we can use~\eqref{eq:dual_form} to write $\widehat{\mathcal A} = \begin{bmatrix}\mathcal D^t & -I_5 \end{bmatrix} = \begin{bmatrix} \mathcal D & -I_5 \end{bmatrix}$, where the final equality holds because $\mathcal D$ is symmetric. Let $\mathcal K$ be the matrix 

\[\mathcal K = \begin{bmatrix} \mathcal A\\ \widehat{\mathcal A} \end{bmatrix} = \begin{bmatrix} I_5 & \mathcal D \\ \mathcal D & -I_5 \end{bmatrix}= \begin{bmatrix}
    1 & 0 & 0 & 0 & 0 &   1 & -1 & 0 & 0 & -1\\
    0 & 1 & 0 & 0 & 0 &   -1 & 1 & -1 & 0 & 0\\
    0 & 0 & 1 & 0 & 0 &   0 & -1 & 1 & -1 & 0\\
    0 & 0 & 0 & 1 & 0 &   0 & 0 & -1 & 1 & -1\\
    0 & 0 & 0 & 0 & 1 &   -1 & 0 & 0 & -1 & 1\\
    1 & -1 & 0 & 0 & -1 &  -1 & 0 & 0 & 0 & 0\\
    -1 & 1 & -1 & 0 & 0 &  0 & -1 & 0 & 0 & 0\\
    0 & -1 & 1 & -1 & 0 &  0 & 0 & -1 & 0 & 0\\
    0 & 0 & -1 & 1 & -1 &  0 & 0 & 0 & -1 & 0\\
    -1 & 0 & 0 & -1 & 1 &  0 & 0 & 0 & 0 & -1\\
\end{bmatrix}\]

By Proposition~\ref{prop:smith_normal}, and the help of a computer algebra system, we can calculate that $|S(R_{10})| = 162$. More precisely, after computing the Smith normal form, we find that $S(R_{10}) \cong (\Z/3\Z)^4 \oplus (\Z/2\Z)$.

While these calculations can be done for any regular matroid, the fact that $\mathcal D$ is not only square, but also symmetric, allows for an alternate perspective using the \emph{Gaussian integers} $\Z[i]$.

Let $\overline{\mathcal K}$ be the $5\times 5$ matrix in $\Z[i]^5$ given by 
\[\overline{\mathcal K} =  I_5 + \mathcal D i =\begin{bmatrix}
    1 + i & -i & 0 & 0 & -i \\
    -i & 1 + i & -i & 0 & 0 \\
    0 & -i & 1 + i & -i & 0 \\
    0 & 0 & -i & 1 + i & -i \\
    -i & 0 & 0 & -i & 1 + i \\
\end{bmatrix}\]

\begin{prop}\label{prop:samecoker}
    The sandpile group $S(R_{10})$ is isomorphic to $\coker_{\Z[i]} (\overline{\mathcal K})$. 
\end{prop}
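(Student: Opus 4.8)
The plan is to realize the $\Z$-module isomorphism $\Z^{10}\cong\Z[i]^5$ explicitly and to check that it carries the lattice $\im_\Z\mathcal K$ onto the lattice $\im_{\Z[i]}\overline{\mathcal K}$, so that the two cokernels become isomorphic as abelian groups. Write an element of $\Z^{10}=\Z^E$ as a pair $(x,y)$ with $x,y\in\Z^5$ (the first five ground set elements recording the \emph{real} chips and the last five recording the \emph{imaginary} chips), and define $\phi\colon\Z^{10}\to\Z[i]^5$ by $\phi(x,y)=x+iy$. This is an isomorphism of abelian groups, so it suffices to prove $\phi(\im_\Z\mathcal K)=\im_{\Z[i]}\overline{\mathcal K}$; the induced map on quotients is then the desired isomorphism.

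First I would compute the image of a generic column of $\mathcal K$ under $\phi$. Since $\mathcal K=\begin{bmatrix}I_5&\mathcal D\\\mathcal D&-I_5\end{bmatrix}$, for $u,v\in\Z^5$ we have $\mathcal K\,(u,v)^t=(u+\mathcal D v,\ \mathcal D u-v)$, whose image under $\phi$ is $(u+\mathcal D v)+i(\mathcal D u-v)$. Next I would expand the action of $\overline{\mathcal K}=I_5+\mathcal D i$ on an arbitrary Gaussian vector $p+iq$ with $p,q\in\Z^5$, using $i^2=-1$, to get $\overline{\mathcal K}(p+iq)=(p-\mathcal D q)+i(\mathcal D p+q)$. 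The key observation is then purely algebraic: substituting $p=u$ and $q=-v$ turns the second expression into the first, so $\phi\big(\mathcal K\,(u,v)^t\big)=\overline{\mathcal K}(u-iv)$.

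Because $(u,v)\mapsto u-iv$ is itself a surjection of $\Z^{10}$ onto $\Z[i]^5$, as $(u,v)$ ranges over $\Z^5\times\Z^5$ the vectors $u-iv$ range over all of $\Z[i]^5$. Hence $\phi(\im_\Z\mathcal K)=\overline{\mathcal K}(\Z[i]^5)=\im_{\Z[i]}\overline{\mathcal K}$ exactly, and passing to quotients gives $\coker_{\Z[i]}\overline{\mathcal K}=\Z[i]^5/\im_{\Z[i]}\overline{\mathcal K}\cong\Z^{10}/\im_\Z\mathcal K=S(R_{10})$ by~\eqref{eq:sandpile_def}.

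The one place to be careful — and the only real subtlety — is the difference between the $\Z[i]$-linear image $\overline{\mathcal K}(\Z[i]^5)$ and the mere $\Z$-linear span of the columns of $\overline{\mathcal K}$: I need the submodule generated over the Gaussian integers, not just over $\Z$, and the argument above produces exactly that because $u-iv$ already sweeps out the full $\Z[i]$-module $\Z[i]^5$. As an independent sanity check on the cardinality, I would verify $N\big(\det_{\Z[i]}\overline{\mathcal K}\big)=|\det_{\Z}\mathcal K|=162$, where $N$ is the field norm on $\Z[i]$; this reflects the general fact that a $\Z[i]$-matrix, viewed as a $\Z$-linear endomorphism of $\Z^{10}$, has real determinant equal to the norm of its Gaussian determinant, and it confirms that no factor has been lost in the identification.
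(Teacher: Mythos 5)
Your proof is correct and takes essentially the same route as the paper's: the paper likewise identifies $\Z^{10}$ with $\Z[i]^5$ via real and imaginary parts, and its proof rests on the identical computation that $\mathcal K(\mathbf a,\mathbf b)^t = (\mathbf a + \mathcal D\mathbf b,\ -\mathbf b + \mathcal D\mathbf a)$ corresponds to $\overline{\mathcal K}(\mathbf a - \mathbf b i) = (\mathbf a + \mathcal D\mathbf b) + (-\mathbf b + \mathcal D\mathbf a)i$. The only difference is that you make explicit the lattice-matching step (surjectivity of $(u,v)\mapsto u - iv$ onto $\Z[i]^5$) that the paper compresses into ``the result follows.''
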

\begin{proof}
    We need to show that $\coker_{\Z[i]} (\overline{\mathcal K})$ and $\coker_{\Z} ({\mathcal K})$ are isomorphic. To prove this, we will show that for any pair $\mbf v,\mbf w \in \Z^5$, the combined vector $(\mbf v,\mbf w)$ is in $\im_{\Z} \mathcal K$ if and only if $\mbf v + \mbf w i$ is in $\im_{\Z[i]} \overline{\mathcal K}$. 

    Consider a pair $\mbf a,\mbf b \in \Z^5$. One can calculate directly that
    \[\mathcal K \begin{bmatrix}\mbf a \\ \mbf b \end{bmatrix} = \begin{bmatrix}\mbf a + \mathcal D\mbf b \\  -\mbf b + \mathcal D\mbf a\end{bmatrix} \hspace{ 1 cm}\text{ and }\]
    \[\overline{\mathcal K} (\mbf a - \mbf b i) = (I_5 + \mathcal D i) (\mbf a - \mbf b i) = (\mbf a + \mathcal D\mbf b) +  (-\mbf b + \mathcal D\mbf a)i.\]
    The result follows. 
\end{proof}

Proposition~\ref{prop:samecoker} introduces an approachable way to interpret $S(R_{10})$. Imagine a pentagon such that each node can have two kinds of chips, real and imaginary.\footnote{We use the term \emph{node} in place of \emph{vertex} to help distinguish our version of chip-firing from the more traditional version on a graph (which is mentioned in Remark~\ref{rem:traditional}). Each node of the pentagon corresponds to a pair of elements of the ground set of $R_{10}$. Note that the ground set of a matroid is an abstraction of the \emph{edges} of a graph, not the \emph{vertices}.} There are also four kinds of firing moves, which only differ by multiplication by $i$. Each of these moves impacts the chips on three adjacent nodes.

\begin{itemize}
    \item[(A)~] Add $1+i$ to a node and add $-i$ to each neighboring node. 
    \item[(B)~] Add $-1+i$ to a node and add $1$ to each neighboring node. 
    \item[(-A)] Add $-1-i$ to a node and add $i$ to each neighboring node. 
    \item[(-B)] Add $1-i$ to a node and add $-1$ to each neighboring node. 
\end{itemize}

We will call these moves \emph{A firings}, \emph{B firings}, \emph{(-A) firings}, and \emph{(-B) firings} respectively. Two chip-configurations on the pentagon are firing equivalent if and only if one can be reached from the other by some sequence of A, B, (-A), and (-B) firings. In Figure~\ref{fig:app_example}, we show a screenshot from our app, which displays the chip configuration $(2-2i, 1, -2-i, 3+i, -1 + i)$. The user can redistribute the chips using the various firings. 

\begin{figure}
    \centering
    \includegraphics[width=.5\linewidth]{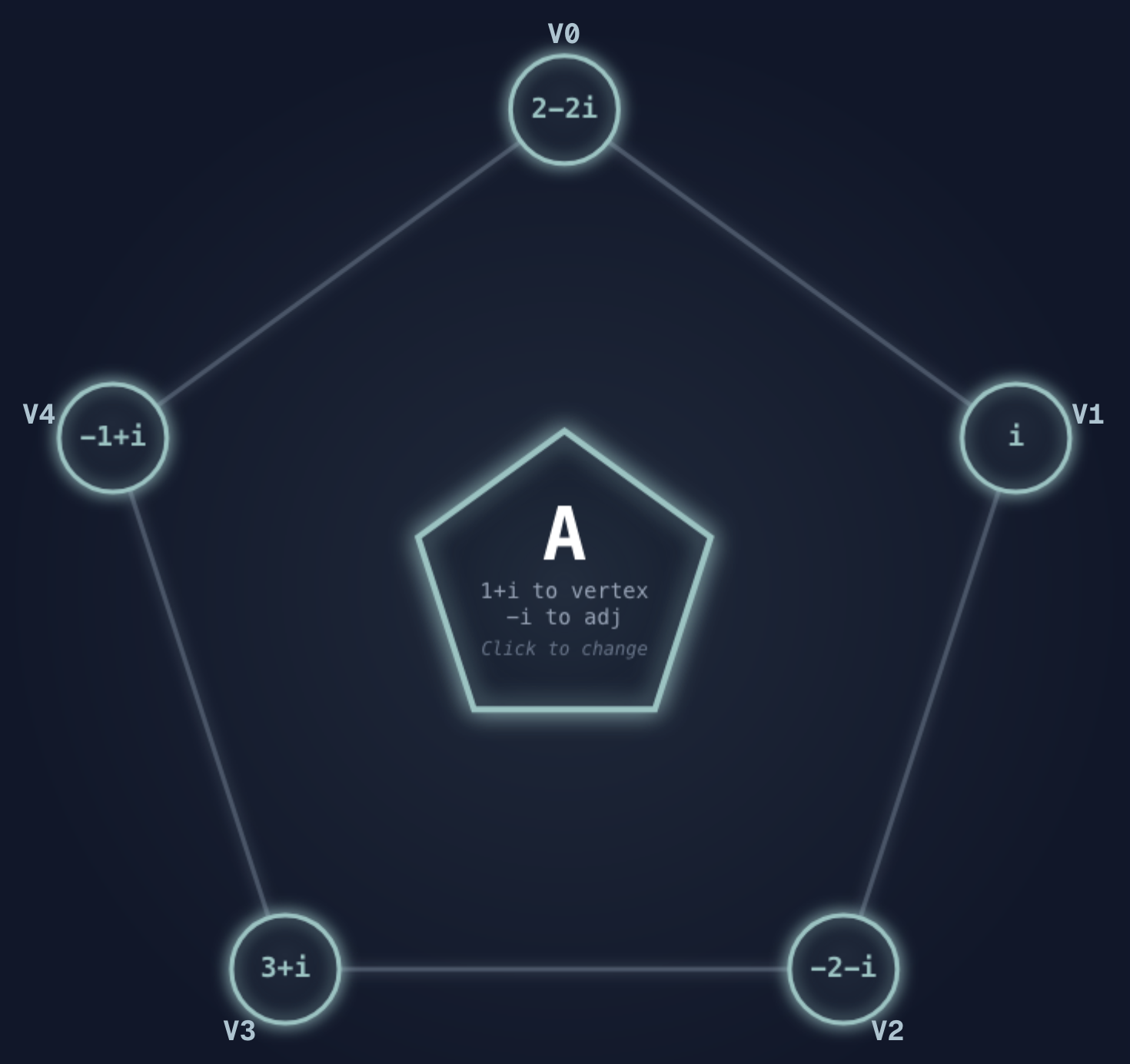}
    \caption{Above is a screenshot from the app that we developed to help understand chip-firing on $R_{10}$ \url{https://pentagon-game.github.io/}. The user can fire a vertex by clicking on it, or switch the kind of firing by clicking on the center pentagon. In this game mode, the chip configuration is chosen to be firing equivalent to the all zeros configuration, and the goal is to reach this configuration through a sequence of firings. }
    \label{fig:app_example}
\end{figure}

In addition to the perspective given by the app, we feel it may be helpful to briefly describe an alternate perspective. Instead of a collection of real and imaginary chips on a pentagon, chip configurations on $R_{10}$ can also be visualized as collections of nodes labeled 0 through 4 on lattice points on the complex plane (see Figure ~\ref{fig:geometric}). A firing move consists of moving one node diagonally $\sqrt2$ units to another lattice point, and moving the two nodes with adjacent labels (mod 5) one unit in a direction $3\pi/4$ radians clockwise from the chosen node. 

\begin{figure}
\begin{center}
    \begin{tikzpicture}[scale = .7, anno/.style = {very thick,-{Stealth[length = 2.5mm, width = 2.2mm, open]}}]
        \draw [thick,<->,>=stealth'] (-3.5,0) to (3.5,0);
        \draw [thick,<->,>=stealth'] (0,-3.5) to (0,3.5);
        \foreach \x in {-3,...,3}
            {\draw [dotted] (\x,-3.5) to (\x,3.5);
            \draw [dotted] (-3.5,\x) to (3.5,\x);}
    \tikzstyle{o}=[circle,draw,fill=black, text = white,inner sep=1pt,minimum size = 1.8mm]
    \tikzstyle{k}=[circle,draw,fill = white,minimum size = 1.8mm]
    \node[o] (0) at (1,1) {\Large{$0$}};
    \node[o] (1) at (-2,3) {\Large{$1$}};
    \node[o] (2) at (-2,0) {\Large{$2$}};
    \node[o] (3) at (1,-2) {\Large{$3$}};
    \node[o] (4) at (2,0) {\Large{$4$}};

    \begin{scope}[shift = {(8,0)}]
    \draw [thick,<->,>=stealth'] (-3.5,0) to (3.5,0);
        \draw [thick,<->,>=stealth'] (0,-3.5) to (0,3.5);
        \foreach \x in {-3,...,3}
            {\draw [dotted] (\x,-3.5) to (\x,3.5);
            \draw [dotted] (-3.5,\x) to (3.5,\x);}
    \node[o] (0) at (2,2) {\Large{$0$}};
    \node[o] (1) at (-2,2) {\Large{$1$}};
    \node[o] (2) at (-2,0) {\Large{$2$}};
    \node[o] (3) at (1,-2) {\Large{$3$}};
    \node[o] (4) at (2,-1) {\Large{$4$}};

    \node[k] (0') at (1,1){};
    \node[k] (1') at (-2,3){};
    \node[k] (4') at (2,0){};

    \draw [anno] (0') to (0);
    \draw [anno] (1') to (1);
    \draw [anno] (4') to (4);
    \end{scope}
    
     \begin{scope}[shift = {(16,0)}]
    \draw [thick,<->,>=stealth'] (-3.5,0) to (3.5,0);
        \draw [thick,<->,>=stealth'] (0,-3.5) to (0,3.5);
        \foreach \x in {-3,...,3}
            {\draw [dotted] (\x,-3.5) to (\x,3.5);
            \draw [dotted] (-3.5,\x) to (3.5,\x);}
    \tikzstyle{o}=[circle,draw,fill=black, text = white,inner sep=1pt,minimum size = 1.8mm]
    \node[o] (0) at (2,2) {\Large{$0$}};
    \node[o] (1) at (-3,2) {\Large{$1$}};
    \node[o] (2) at (-1,-1) {\Large{$2$}};
    \node[o] (3) at (0,-2) {\Large{$3$}};
    \node[o] (4) at (2,-1) {\Large{$4$}};

    \node[k] (2') at (-2,0){};
    \node[k] (1') at (-2,2){};
    \node[k] (3') at (1,-2){};

    \draw [anno] (2') to (2);
    \draw [anno] (1') to (1);
    \draw [anno] (3') to (3);
    \end{scope}

    \begin{scope}[shift = {(0,-8)}]
    \draw [thick,<->,>=stealth'] (-3.5,0) to (3.5,0);
        \draw [thick,<->,>=stealth'] (0,-3.5) to (0,3.5);
        \foreach \x in {-3,...,3}
            {\draw [dotted] (\x,-3.5) to (\x,3.5);
            \draw [dotted] (-3.5,\x) to (3.5,\x);}
    \tikzstyle{o}=[circle,draw,fill=black, text = white,inner sep=1pt,minimum size = 1.8mm]
    \node[o] (0) at (2,3) {\Large{$0$}};
    \node[o] (1) at (-3,2) {\Large{$1$}};
    \node[o] (2) at (-1,-1) {\Large{$2$}};
    \node[o] (3) at (0,-1) {\Large{$3$}};
    \node[o] (4) at (1,-2) {\Large{$4$}};

    \node[k] (4') at (2,-1){};
    \node[k] (3') at (0,-2){};
    \node[k] (0') at (2,2){};

    \draw [anno] (4') to (4);
    \draw [anno] (3') to (3);
    \draw [anno] (0') to (0);
    \end{scope}

    \begin{scope}[shift = {(8,-8)}]
    \draw [thick,<->,>=stealth'] (-3.5,0) to (3.5,0);
        \draw [thick,<->,>=stealth'] (0,-3.5) to (0,3.5);
        \foreach \x in {-3,...,3}
            {\draw [dotted] (\x,-3.5) to (\x,3.5);
            \draw [dotted] (-3.5,\x) to (3.5,\x);}
    \tikzstyle{o}=[circle,draw,fill=black, text = white,inner sep=1pt,minimum size = 1.8mm]
    \node[o] (0) at (2,3) {\Large{$0$}};
    \node[o] (1) at (-3,2) {\Large{$1$}};
    \node[o] (2) at (-1,0) {\Large{$2$}};
    \node[o] (3) at (-1,-2) {\Large{$3$}};
    \node[o] (4) at (1,-1) {\Large{$4$}};

    \node[k] (3') at (0,-1){};
    \node[k] (2') at (-1,-1){};
    \node[k] (4') at (1,-2){};

    \draw [anno] (3') to (3);
    \draw [anno] (2') to (2);
    \draw [anno] (4') to (4);
    \end{scope}

    \begin{scope}[shift = {(16,-8)}]
    \draw [thick,<->,>=stealth'] (-3.5,0) to (3.5,0);
        \draw [thick,<->,>=stealth'] (0,-3.5) to (0,3.5);
        \foreach \x in {-3,...,3}
            {\draw [dotted] (\x,-3.5) to (\x,3.5);
            \draw [dotted] (-3.5,\x) to (3.5,\x);}
    \tikzstyle{o}=[circle,draw,fill=black, text = white,inner sep=1pt,minimum size = 1.8mm]
    \node[o] (0) at (2,3) {\Large{$0$}};
    \node[o] (1) at (-2,2) {\Large{$1$}};
    \node[o] (2) at (-2,1) {\Large{$2$}};
    \node[o] (3) at (0,-2) {\Large{$3$}};
    \node[o] (4) at (1,-1) {\Large{$4$}};

    \node[k] (2') at (-1,0){};
    \node[k] (1') at (-3,2){};
    \node[k] (3') at (-1,-2){};

    \draw [anno] (3') to (3);
    \draw [anno] (2') to (2);
    \draw [anno] (1') to (1);
    \end{scope}

    \end{tikzpicture}
    \caption{This figure shows one way to visualize chip-firing on $R_{10}$. Chip-configurations correspond to 5 labeled nodes on lattice points on the complex plane. For example, the initial configuration is $(1+i, -2 + 3i, -2, 1-2i, 2)$ and the final configuration is $(2+3i, -2 + 2i, -2 + i,-2i, 1-i)$. This final configuration is reached after an A firing of 0, a (-B) firing of 2, a (-A) firing of 4, a (-A) firing of 3, and a B firing of 2. Note that there are no restrictions in this construction about multiple nodes occupying the same position; we only avoided this for the example to make things easier to draw.}
    \label{fig:geometric}
\end{center}
\end{figure}
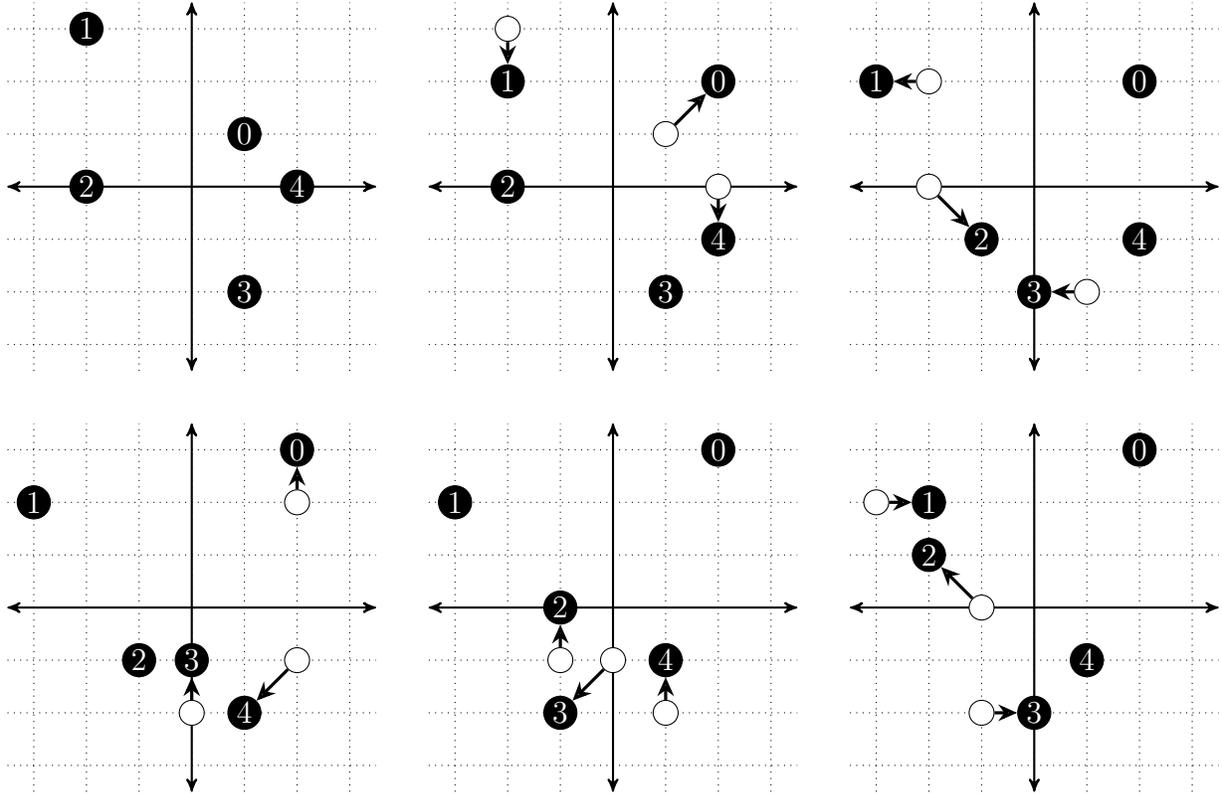

\subsection{Representatives of $S(R_{10})$}
Here, we will find a nice representative for each equivalence class of $S(R_{10})$. Note that we are using the word \emph{nice} colloquially, and we do not ascribe a precise mathematical meaning. 

\begin{lemma}\label{lem:no_i}
    Every equivalence class of $S(R_{10})$ contains chip configurations which do not have any imaginary chips. 
\end{lemma}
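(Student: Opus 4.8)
The plan is to work entirely in the Gaussian-integer model $\coker_{\Z[i]}\overline{\mathcal K}$ furnished by Proposition~\ref{prop:samecoker}, where a chip configuration is an element $\mbf p + \mbf q\, i \in \Z[i]^5$ with $\mbf p,\mbf q \in \Z^5$ recording the real and the imaginary chips. The goal is then to produce, for each such configuration, an element of $\im_{\Z[i]}\overline{\mathcal K}$ whose subtraction annihilates the entire imaginary part $\mbf q$ at once. Since every $\mbf z \in \Z[i]^5$ splits as $\mbf a + \mbf b i$ and $\overline{\mathcal K}\mbf z = \sum_j a_j\,\overline{\mathcal K}\mbf e_j + \sum_j b_j\,(i\,\overline{\mathcal K}\mbf e_j)$ is a $\Z$-combination of A and B firings, it suffices to exhibit a single vector $\mbf z$ for which $\mbf p + \mbf q\, i - \overline{\mathcal K}\mbf z$ has vanishing imaginary part.

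The crux is one short computation that exploits $i^2 = -1$. Because $\overline{\mathcal K} = I_5 + \mathcal D i$, firing at the purely imaginary vector $i\,\mbf q$ gives
\[\overline{\mathcal K}(i\,\mbf q) = (I_5 + \mathcal D i)(i\,\mbf q) = i\,\mbf q - \mathcal D\,\mbf q.\]
Consequently $i\,\mbf q \equiv \mathcal D\,\mbf q \pmod{\im_{\Z[i]}\overline{\mathcal K}}$, and therefore
\[\mbf p + \mbf q\, i \equiv \mbf p + \mathcal D\,\mbf q \pmod{\im_{\Z[i]}\overline{\mathcal K}}.\]
The right-hand side lies in $\Z^5$, that is, it carries no imaginary chips, which is exactly the assertion of the lemma. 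I would display $\mathcal D\,\mbf q$ concretely to stress that the imaginary chips are not destroyed but rather \emph{converted} into real chips according to the pentagon adjacency encoded by $\mathcal D$.

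Finally, I would translate the algebra back into firings for the reader: since $\overline{\mathcal K}(i\,\mbf q) = \sum_{j=1}^{5} q_j\,(i\,\overline{\mathcal K}\mbf e_j)$ and each $i\,\overline{\mathcal K}\mbf e_j$ is precisely a B firing at node $j$, the imaginary part is cleared by applying the $(-B)$ firing a total of $q_j$ times at each node $j$ (a negative count meaning that many B firings). The only genuine \emph{idea} in the argument is the observation that multiplying by $i$ before firing turns the imaginary chips into the real vector $\mathcal D\,\mbf q$; I expect this recognition, rather than any calculation, to be the single nontrivial step, and indeed there is no real obstacle once the identity above is spotted. It is worth noting that this transparency is exactly what the Gaussian-integer reformulation buys: the equivalent statement over $\Z$, that the last five coordinates of a representative of any class of $\coker_{\Z}\mathcal K$ can be zeroed out, is considerably less obvious when phrased in the $10$-dimensional picture.
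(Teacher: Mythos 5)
Your proof is correct and is essentially the paper's own argument in algebraic form: subtracting $\overline{\mathcal K}(i\,\mathbf q)$ amounts precisely to performing $q_j$ many $(-B)$ firings (or $B$ firings when $q_j<0$) at each node $j$, which is exactly how the paper clears the imaginary chips node by node. The identity $\overline{\mathcal K}(i\,\mathbf q)=i\,\mathbf q-\mathcal D\,\mathbf q$ is a compact way of recording the paper's observation that these firings do not disturb imaginary chips at other nodes, with the bonus that it exhibits the explicit real representative $\mathbf p+\mathcal D\,\mathbf q$.
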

\begin{proof}
    Suppose that a node has $a$ real chips and $b$ imaginary chips. If $b$ is positive, we perform a (-B) firing to the node $b$ times. If $b$ is negative, we perform a B firing to the node $-b$ times. After this, the node will have no imaginary chips, and these firings do not impact the number of imaginary chips on any other node. After going through the process for all five of the nodes, there will be no imaginary chips remaining. 
\end{proof}

Now that we have shown that we can get rid of imaginary chips, we can find chip configurations with specified restrictions on the number of real chips at each node. One useful fact that will assist in this technique is the following. 

\begin{lemma}
    There exists a unique element of $S(R_{10})$ with order 2. 
\end{lemma}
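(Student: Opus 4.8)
The plan is to read the answer directly off the decomposition $S(R_{10}) \cong (\Z/3\Z)^4 \oplus (\Z/2\Z)$ that was already obtained from the Smith normal form of $\mathcal K$. Once we grant that isomorphism, the statement becomes a purely group-theoretic fact about a finite abelian group whose $2$-primary part is as small as possible, and the only remaining work is to make that precise.

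First I would recall that in any finite abelian group $G$ the elements $g$ with $2g = 0$ form a subgroup, and that every element of order exactly $2$ is a nonidentity member of this subgroup. It therefore suffices to count the solutions of $2g = 0$ in $G = (\Z/3\Z)^4 \oplus (\Z/2\Z)$ and then discard the identity. Writing $g = (x,y)$ with $x \in (\Z/3\Z)^4$ and $y \in \Z/2\Z$, the equation $2g = 0$ splits as $2x = 0$ together with $2y = 0$. Since $2$ is invertible modulo $3$, the first equation forces $x = 0$, while the second holds for both values of $y$. Hence $\{g : 2g = 0\}$ has exactly the two elements $(0,0)$ and $(0,1)$, so there is precisely one element of order $2$. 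Equivalently, and perhaps more memorably: any element of order $2$ must lie in the $2$-Sylow subgroup of $S(R_{10})$, which here is the cyclic group $\Z/2\Z$, and a nontrivial cyclic $2$-group contains a unique involution.

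I do not expect a genuine obstacle along this route, since it uses only the already-established isomorphism. The one point deserving care is the appeal to that isomorphism itself, which rests on the computer-assisted Smith normal form of $\mathcal K$. If instead we wanted the argument to stay self-contained within the chip-firing framework, we would have to exhibit an explicit order-$2$ configuration (the natural candidate, in view of the paper's stated representatives, is $3$ real chips on a single distinguished node and none elsewhere) and verify both that doubling it is firing equivalent to the all-zeros configuration and that it is not itself firing equivalent to zero. That verification amounts to checking whether the relevant vectors do or do not lie in $\im_{\Z[i]} \overline{\mathcal K}$, and it is precisely the sort of computation that the later uniqueness-of-representatives argument will systematize; so this is where the only real content would sit if we avoided quoting the Smith normal form.
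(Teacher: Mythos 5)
Your proof is correct and follows exactly the paper's approach: both read the uniqueness of the order-$2$ element directly off the isomorphism $S(R_{10}) \cong (\Z/3\Z)^4 \oplus (\Z/2\Z)$, which the paper states ``follows immediately.'' You simply spell out the elementary group-theoretic step (that $2$ is invertible modulo $3$ forces the $(\Z/3\Z)^4$ component to vanish) that the paper leaves implicit.
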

\begin{proof}
    This follows immediately from the fact that $S(R_{10}) \cong (\Z/3\Z)^4 \oplus (\Z/2\Z)$. 
\end{proof}

For a chip configuration $C$, we will write $C \sim H$ if the equivalence class containing $C$ is the element of order $2$ in $S(R_{10})$. For chip configurations $C$ and $C'$, we will also write $C \sim C'$ if $C$ and $C'$ are firing equivalent. 

\begin{lemma}\label{lem:half_vals} 
    Let $C$ be the chip configuration which places 1 chip on each node. Let $C'$ be the chip configuration that places $3$ chips on one node and $0$ chips on the other nodes. Then, $C \sim H$ and $C' \sim H$. 
\end{lemma}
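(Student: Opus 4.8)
The plan is to work entirely inside $\coker_{\Z[i]}(\overline{\mathcal K})$, which is legitimate by Proposition~\ref{prop:samecoker}, and to exploit the preceding lemma: since $S(R_{10})$ has a \emph{unique} element of order $2$, any chip configuration $X$ satisfies $X \sim H$ as soon as we verify the two conditions $2X \sim 0$ (equivalently $2X \in \im_{\Z[i]}\overline{\mathcal K}$) and $X \not\sim 0$ (equivalently $X \notin \im_{\Z[i]}\overline{\mathcal K}$). Indeed, in $(\Z/3\Z)^4 \oplus (\Z/2\Z)$ the elements killed by doubling are exactly $0$ and $H$. So the lemma reduces to membership tests for $\im_{\Z[i]}\overline{\mathcal K}$, and the structural fact I would lean on is that $\overline{\mathcal K} = I_5 + \mathcal D i$ is a \emph{circulant} matrix.

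For $C = \mbf 1 = (1,1,1,1,1)$, circulance makes everything immediate: $\mbf 1$ is an eigenvector of $\overline{\mathcal K}$ whose eigenvalue is the row sum $(1+i) - i - i = 1 - i$. Hence $\overline{\mathcal K}\big((1+i)\mbf 1\big) = (1+i)(1-i)\,\mbf 1 = 2\,\mbf 1 = 2C$, with $(1+i)\mbf 1 \in \Z[i]^5$, so $2C \sim 0$. On the other hand the unique $\mathbb Q(i)$-solution of $\overline{\mathcal K}x = C$ is $x = \tfrac{1}{1-i}\mbf 1 = \tfrac{1+i}{2}\mbf 1$, whose entries are not Gaussian integers; therefore $C \notin \im_{\Z[i]}\overline{\mathcal K}$ and $C \not\sim 0$. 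Combining the two conditions gives $C \sim H$.

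For $C' = (3,0,0,0,0)$ I would not repeat an order computation but instead show directly that $C \sim C'$, i.e. that $C - C' = (-2,1,1,1,1)$ lies in $\im_{\Z[i]}\overline{\mathcal K}$; then $C' \sim C \sim H$ follows. To produce a preimage I would use the reflection symmetry that fixes node $0$ and swaps $1 \leftrightarrow 4$ and $2 \leftrightarrow 3$ (under which the target vector is invariant), searching for a symmetric solution $v = (a,b,c,c,b)$. This collapses the $5\times 5$ Gaussian-integer linear system to three equations, whose solution is $v = (-1+i,\,0,\,1,\,1,\,0) \in \Z[i]^5$; a one-line check then confirms $\overline{\mathcal K}v = (-2,1,1,1,1)$.

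The genuinely computational step — and thus the main obstacle — is finding this integral preimage $v$; everything else is the eigenvalue observation together with the uniqueness of $H$. The symmetry reduction is what keeps that step routine, and once $v$ is in hand the whole lemma is verified by a single matrix-vector product. If one prefers a symmetric presentation over reuse, one can instead check that $C'$ has order $2$ on its own via the same two membership tests, but the $C \sim C'$ route is shorter.
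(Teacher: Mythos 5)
Your proof is correct, and it shares the paper's underlying skeleton --- both arguments show that the classes of $C$ and $C'$ are annihilated by $2$ but are nonzero, then invoke the uniqueness of the order-$2$ element --- but your implementation of the membership tests is genuinely different. The paper computes the full inverse matrix $\overline{\mathcal K}^{-1}$ and applies it to $C$ and $C'$ directly (equations \eqref{eq:half_proof} and \eqref{eq:half_proof2}), observing that both images are half of a vector in $\Z[i]^5$ but not themselves in $\Z[i]^5$; this requires having the $5\times 5$ inverse in hand, in practice via a computer algebra system. You avoid the inverse entirely: for $C = \mathbf 1$ the circulant structure gives $\overline{\mathcal K}\mathbf 1 = (1-i)\mathbf 1$, so $(1+i)\mathbf 1$ is an integral preimage of $2C$ while the unique rational preimage $\tfrac{1+i}{2}\mathbf 1$ of $C$ is non-integral, settling both tests at once; and for $C'$ you bypass the order computation altogether by exhibiting $v = (-1+i,0,1,1,0)$ with $\overline{\mathcal K}v = C - C'$, giving $C' \sim C \sim H$. (I checked the matrix-vector product; it is correct, as is your claim that the elements killed by doubling in $(\Z/3\Z)^4\oplus(\Z/2\Z)$ are exactly $0$ and $H$.) What your route buys is hand-checkability and a conceptual explanation of \emph{why} $\mathbf 1$ has order two: the relevant eigenvalue $1-i$ has norm $2$. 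What the paper's route buys is reusable data: the explicit half-integral vectors in \eqref{eq:half_proof} and \eqref{eq:half_proof2} are precisely what Remark~\ref{rem:specific} later reads off to describe the firing sequences that add $2$ chips to every node or $6$ chips to a chosen node; your preimages $(1+i)\mathbf 1$ and $v$ encode recipes for $2\mathbf 1$ and $C - C'$ instead, from which that remark's content would have to be re-derived.
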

\begin{proof}
    One can verify that 
    \begin{equation}\label{eq:half_proof}\overline{\mathcal K}^{-1}(C) = \frac 16\begin{bmatrix}
    3 - i & 1 + i & -1 + i & -1 + i & 1 + i \\
    1 + i & 3 - i & 1 + i & -1 + i & -1 + i \\
    -1 + i & 1 + i & 3 - i & 1 + i & -1 + i \\
    -1 + i & -1 + i & 1 + i & 3 - i & 1 + i \\
    1 + i & -1 + i & -1 + i & 1 + i & 3 - i \\
\end{bmatrix} \begin{bmatrix} 1 \\ 1 \\ 1 \\ 1 \\ 1\end{bmatrix}= \frac12\begin{bmatrix}1 + i\\ 1 + i\\ 1 + i\\ 1 + i\\ 1 + i\end{bmatrix}\end{equation}
\text{ and }
\begin{equation}\label{eq:half_proof2}\overline{\mathcal K}^{-1}(C') = \frac 16\begin{bmatrix}
    3 - i & 1 + i & -1 + i & -1 + i & 1 + i \\
    1 + i & 3 - i & 1 + i & -1 + i & -1 + i \\
    -1 + i & 1 + i & 3 - i & 1 + i & -1 + i \\
    -1 + i & -1 + i & 1 + i & 3 - i & 1 + i \\
    1 + i & -1 + i & -1 + i & 1 + i & 3 - i \\
\end{bmatrix} \begin{bmatrix} 3 \\ 0 \\ 0 \\ 0 \\ 0\end{bmatrix}=\frac12\begin{bmatrix}3 -i\\ 1 + i\\ -1 + i\\ -1 + i\\ 1 + i\end{bmatrix}.\end{equation} 

In both cases, we obtain a vector that is half of an element of $\mathbb Z[i]^5$.  
\end{proof}

\begin{remark}\label{rem:specific}
    We can also use~\eqref{eq:half_proof} and~\eqref{eq:half_proof2} to determine which firings are required to add 2 chips to every node, or to add 6 chips to a specific node. 

    For the first case, we do an A firing and a B firing at every node. For the second case, we do three A  firings and a (-B) firing on the specified node, one A firing and one B firing on each adjacent node, and one (-A) firing and one B firing on each non-adjacent node. 
\end{remark}

\begin{thm}\label{thm:representatives}
    Consider the five node model for chip-firing on $R_{10}$ and choose one particular node to act as \emph{the distinguished node}. We obtain one representative for each equivalence class of $S(R_{10})$ by placing either 0 or 3 chips on the distinguished node, and either 0, 1, or 2 chips on every other node. 
\end{thm}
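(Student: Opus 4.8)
The plan is to argue by counting and then injectivity. First I would note that there are exactly $2\cdot 3^4 = 162$ configurations of the prescribed shape, matching $|S(R_{10})| = 162$. Hence the map sending each such configuration to its class in $S(R_{10})$ is a map between two finite sets of equal cardinality, so it is a bijection as soon as it is injective. Without loss of generality I take the distinguished node to be node $0$, and I would prove injectivity.

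The heart of the argument is to describe explicitly the lattice $L := \{w \in \Z^5 : w \sim \mbf 0\}$ of real configurations firing equivalent to the empty configuration. By Lemma~\ref{lem:no_i} every class has a real representative, so the natural map $\Z^5 \to S(R_{10})$ is surjective with kernel $L$; thus $\Z^5/L \cong S(R_{10})$ and $[\Z^5 : L] = 162$. I claim $L$ is generated by $(2,2,2,2,2)$ together with all vectors $3(e_j + e_k)$. That $(2,2,2,2,2) \in L$ is exactly \eqref{eq:half_proof} scaled by $2$, since it gives $\overline{\mathcal K}^{-1}(2,2,2,2,2) = (1+i)(1,1,1,1,1) \in \Z[i]^5$. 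For the $3(e_j+e_k)$, the key observation is that every entry of the matrix $6\overline{\mathcal K}^{-1}$ displayed in \eqref{eq:half_proof} is a Gaussian integer whose real and imaginary parts are both odd; therefore the sum of any two of its columns is divisible by $2$ in $\Z[i]$, making $\overline{\mathcal K}^{-1}\bigl(3(e_j+e_k)\bigr) = \tfrac12(\mathrm{col}_j + \mathrm{col}_k)$ a Gaussian integer vector, so $3(e_j+e_k) \sim \mbf 0$. The sublattice $L'$ they generate equals $\langle(2,2,2,2,2)\rangle + 3L_{\mathrm{even}}$, where $L_{\mathrm{even}}$ is the index-$2$ sublattice of vectors of even coordinate sum. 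Since $3L_{\mathrm{even}}$ has index $2\cdot 3^5$ in $\Z^5$ and $(2,2,2,2,2)$ has order $3$ modulo $3L_{\mathrm{even}}$, one gets $[\Z^5 : L'] = (2\cdot 3^5)/3 = 162$. As $L' \subseteq L$ and both have index $162$, I conclude $L = L'$.

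With $L$ identified, injectivity is quick. Suppose two prescribed configurations $C, C'$ are firing equivalent and set $v = C - C' \in L$; then its distinguished coordinate lies in $\{-3,0,3\}$ and its four other coordinates lie in $\{-2,-1,0,1,2\}$. Writing $v = 2s(1,\dots,1) + 3w$ with $w$ of even coordinate sum and reducing modulo $3$ shows every coordinate of $v$ is congruent to $2s \pmod 3$; since the distinguished coordinate is $\equiv 0 \pmod 3$ we get $s \equiv 0 \pmod 3$, forcing all coordinates to be $\equiv 0 \pmod 3$, and hence the four non-distinguished coordinates to vanish. Finally, every element of $L$ has even coordinate sum (each generator does), whereas $v = v_0 e_0$ has sum $v_0 \in \{-3,0,3\}$; evenness forces $v_0 = 0$, so $v = \mbf 0$ and $C = C'$.

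The step I expect to be the main obstacle is pinning down $L$: both checking that the proposed generators are trivial moves (via the parity of the entries of $6\overline{\mathcal K}^{-1}$) and carrying out the index bookkeeping needed to rule out $L$ being strictly larger than $L'$. Once $L$ is in hand, the congruence-and-parity endgame is routine. An alternative and more constructive route, likely closer to the \emph{algorithm} promised in the introduction, would prove surjectivity directly: starting from a real configuration (Lemma~\ref{lem:no_i}), use the moves $3(e_i + e_0)$ to push each non-distinguished coordinate into $\{0,1,2\}$, then use $6e_0$ and $(2,2,2,2,2)$ to drive the distinguished coordinate into $\{0,3\}$, verifying that the bookkeeping closes up.
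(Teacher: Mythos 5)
Your proof is correct, but it takes a genuinely different route from the paper's. The paper proves the complementary direction: it establishes \emph{surjectivity} by an explicit reduction --- strip imaginary chips via Lemma~\ref{lem:no_i}, clear the distinguished node by repeatedly subtracting the all-ones configuration, reduce the remaining nodes modulo $3$, and then correct by the order-$2$ element $H$ of Lemma~\ref{lem:half_vals}, using the parity invariant (Lemma~\ref{lem:parity}) to decide whether the correction is needed --- and then invokes the count $2\cdot 3^4 = 162 = |S(R_{10})|$ to conclude the representatives are pairwise inequivalent. You instead prove \emph{injectivity}, by pinning down the kernel lattice $L$ of real configurations equivalent to $\mbf 0$ as $\langle(2,2,2,2,2)\rangle + 3L_{\mathrm{even}}$ and comparing indices, and then invoke the same count for surjectivity. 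Your verification of the generators is sound: the parity observation about the entries of $6\overline{\mathcal K}^{-1}$ in~\eqref{eq:half_proof} does show $\overline{\mathcal K}^{-1}\bigl(3(e_j+e_k)\bigr) \in \Z[i]^5$, and the index bookkeeping $[\Z^5 : L'] = (2\cdot 3^5)/3 = 162$ is right. In fact your generators are precisely ``twice'' the paper's order-$2$ representatives --- in the paper's language $3(e_j+e_k) \sim 2H \sim \mbf 0$ and $(2,\dots,2) \sim 2H \sim \mbf 0$ --- and your final evenness argument is Lemma~\ref{lem:parity} in disguise. What the paper's route buys is constructiveness: its surjectivity proof becomes Algorithm~\ref{alg:stabilize} almost verbatim. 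What your route buys is a clean structural description of the lattice of firing-trivial real configurations, which makes uniqueness of representatives transparent rather than a byproduct of counting, and which could be reused to test equivalence of arbitrary real configurations by congruences modulo $3$ and a parity check alone.
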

\begin{proof}
    To prove the theorem, we will show that every chip configuration is firing equivalent to one of the given form. Since there are 162 total chip configurations of this form, and this is also the size of $S(R_{10})$, we can then conclude that each representative must be in a different firing equivalence class.
    
    Begin with an arbitrary chip-configuration $C$. By Lemma~\ref{lem:no_i}, there exists some $C'$ with no imaginary chips such that $C' \sim C$. Suppose that $C'$ has $a$ chips on the distinguished node. We can subtract one chip from every node $a$ times to get a new configuration $C''$. By Lemma~\ref{lem:half_vals}, we know that $C'' \sim (C' + aH)$. 

    Next, for any node that doesn't have $0$, $1$, or $2$ chips, we can add or subtract 3 chips until it does. This gives a new configuration $C'''$ which has no chips on the distinguished node, and $0$, $1$, or $2$ chips on each other node. By Lemma~\ref{lem:half_vals}, if $b$ is the total number of times that we add/subtract $3$ chips, then \[C''' \sim (C'' + bH) \sim (C' + (a+b)H) \sim (C + (a+b)H).\]

    Finally, since $2H$ is equivalent to the identity, we must either have $C''' \sim C$ or $C''' \sim (C + H)$, depending on the parity of $a+b$. In the first case, we are done and $C'''$ is the desired representative. In the second case, we obtain the desired representative from $C'''$ after adding $3$ chips to the distinguished node. 
\end{proof}

The last step in the proof of Theorem~\ref{thm:representatives} required determining whether $C''' \sim C$ or $C''' \sim (C + H)$. One approach would be to check if $\overline{\mathcal K}^{-1}(C''' - C) \in \mathbb Z[i]^5$, but there is an even simpler way using the following lemma. 

\begin{lemma}\label{lem:parity}
    Let $C$ and $C'$ be chip configurations such that $C \sim (C' + H)$. Then, the total number of chips on $C$ and the total number of chips on $C'$ must have opposite parity (counting both real and imaginary chips). 
\end{lemma}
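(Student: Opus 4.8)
The plan is to build a single parity invariant on chip configurations that is constant on firing equivalence classes and that detects the order-two element $H$. Define $\phi \colon \Z[i]^5 \to \Z/2\Z$ by sending a configuration $\mbf z = \mbf a + \mbf b i$ (with $\mbf a, \mbf b \in \Z^5$) to $\sum_{j}(a_j + b_j) \bmod 2$; this is exactly the parity of the total number of chips, real and imaginary, in $\mbf z$. Since it is a sum of coordinates, $\phi$ is a group homomorphism from $(\Z[i]^5, +)$ to $\Z/2\Z$. The lemma will follow once I show (i) $\phi$ vanishes on $\im_{\Z[i]} \overline{\mathcal K}$, so that $\phi$ descends to a well-defined function on $S(R_{10})$, and (ii) $\phi(H) = 1$.

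For (i), recall from Proposition~\ref{prop:samecoker} and the surrounding discussion that the lattice $\im_{\Z[i]} \overline{\mathcal K}$ is generated over $\Z$ by the A firings (the columns of $\overline{\mathcal K}$) together with the B firings ($i$ times those columns), since the $(-A)$ and $(-B)$ firings are merely their negatives. Thus it suffices to check that each A firing and each B firing changes the total chip count by an even number. An A firing at a node adds $1 + i$ there and $-i$ at each of its two neighbors, for a net change in real-plus-imaginary count of $(1 + 1) + 2(0 - 1) = 0$; a B firing adds $-1 + i$ at the node and $1$ at each of its two neighbors, for a net change of $(-1 + 1) + 2(1 + 0) = 2$. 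Both are even, so $\phi$ is unchanged by every firing. (Equivalently, $\phi$ factors through the coordinatewise ring reduction $\Z[i] \to \Z[i]/(1+i) \cong \Z/2\Z$, under which $i \equiv 1$ and $1+i \equiv 0$, so $\overline{\mathcal K} = I_5 + \mathcal D i$ reduces to the adjacency matrix of the pentagon; because each node has exactly two neighbors, every column sum of this reduction is even, which is precisely the vanishing we need.) I expect this invariance step to be the main obstacle, mostly because one must be careful to test all firing generators — in particular the B firings, where the imaginary unit mixes the real and imaginary coordinates — rather than only the A firings.

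For (ii), I would evaluate $\phi$ on any convenient representative of $H$ supplied by Lemma~\ref{lem:half_vals}. The all-ones configuration $C$ has total chip count $5$, which is odd, so $\phi(H) = \phi(C) = 1$; the configuration $C'$ placing $3$ chips on one node gives total count $3$, again confirming $\phi(H) = 1$. Finally, combining the two facts, if $C \sim (C' + H)$ then, since $\phi$ is a firing invariant and a homomorphism, $\phi(C) = \phi(C') + \phi(H) = \phi(C') + 1$ in $\Z/2\Z$. Hence the parities of the total chip counts of $C$ and $C'$ differ, which is exactly the claim.
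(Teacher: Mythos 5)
Your proof is correct and follows essentially the same route as the paper: both rest on the observation that every firing move changes the total chip count by an even amount (so parity of the total is a firing invariant) and that the representatives of $H$ from Lemma~\ref{lem:half_vals} have odd totals. In fact, your computation is slightly more careful than the paper's, which asserts that each firing changes the total by exactly $2$, whereas the A and ($-$A) firings actually change it by $0$ --- even in either case, so the argument is unaffected.
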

\begin{proof}
    Each of the four kinds of firings has a net effect of adding or subtracting exactly 2 chips to the configuration. In particular, a sequence of firings cannot change the parity of the total number of chips. Furthermore, each chip configuration identified in Lemma~\ref{lem:half_vals} has an odd number of total chips. The result follows. 
\end{proof}

The following algorithm gives a simple procedure to determine a representative from Theorem~\ref{thm:representatives} for each equivalence class of $S(R_{10})$. For the most part, the algorithm follows the proof of Theorem~\ref{thm:representatives}, but we avoid firing language to make the argument easier to parse, and we take advantage of Lemma~\ref{lem:parity}. 

\begin{alg}\label{alg:stabilize}
    Input: An element of $\Z[i]^5$, thought of as a chip configuration.
    \begin{enumerate}[1.]
        \item Label the nodes $v_0,v_1,v_2,v_3,v_4$ and let $\psi(v_k)$ be the number of imaginary chips at node $v_k$. 
        \item At each node $v_k$, add $\psi(v_{k}) - \psi(v_{k+1}) - \psi(v_{k-1})$ real chips (where indices are mod 5). 
        \item Remove all imaginary chips. 
        \item Introduce a boolean variable isEven.
        \item Count the total chips. If this total is even, set isEven = True. Otherwise, set isEven = False. 
        \item Subtract the number of chips at the distinguished node from every node
        \item At each node, set the number of chips to the remainder after dividing by 3. 
        \item Count the total chips. If the total is even and isEven = False, or if the total is odd and isEven = True, then add 3 chips to the distinguished node. 
    \end{enumerate}    
    Output: the resulting configuration.
\end{alg}
\begin{proof}
The first 3 steps use $B$ firings and $(-B)$ firings to get rid of all imaginary chips (see Lemma~\ref{lem:no_i}). Steps 4 and 5 keep track of the parity of the configuration so that we can use Lemma~\ref{lem:parity} to distinguish between chip configurations that differ by $H$. Steps 6 and 7 use Lemma~\ref{lem:half_vals} to add chip configurations equal to $H$. Finally, Step 8 applies our earlier parity reasoning. 
\end{proof}

\begin{example}
    Let us apply Algorithm~\ref{alg:stabilize} to find a nice representative for the equivalence class of $S(R_{10})$ that contains the chip configuration $(3+i, 4-6i, 7+i, -8-8i, 3)$ (with $v_0$ as our distinguished node). 

    In the initial configuration, we have $\psi(v_0) = 1$, $\psi(v_1) = -6$, $\psi(v_2) = 1$, and $\psi(v_3) = -8$, $\psi(v_4) = 0$. In Step 2, we add $7$ chips to $v_0$, $-8$ chips to $v_1$, $15$ chips to $v_2$, $-9$ chips to $v_3$, and $7$ chips to $v_4$. After Step 3, we have the chip configuration $(10,-4,22,-17,10)$. In Steps 4 and 5, we set isEven = False. After Steps 6 and 7, we have the chip configuration $(0, 1, 0, 0, 0)$. Finally, since the total number of chips is still odd, we do not change anything in Step 8. Thus, $(0, 1, 0, 0, 0)$ is our representative configuration. 

    We can verify that this configuration is in the same equivalence class as our original configuration by computing 
    \[ \overline{\mathcal K}^{-1}\left (\begin{bmatrix} 0 \\ 1 \\ 0 \\ 0 \\ 0\end{bmatrix}-\begin{bmatrix} 3+i \\ 4 - 6i \\ 7+i \\ -8-8i \\ 3\end{bmatrix}\right ) = \begin{bmatrix} -5 - i \\ -4 + i \\ -4 + 3i \\ 4 - i\\ -1\end{bmatrix}\]
    The resulting vector is in $\Z[i]^5$, so the two chip configurations must be firing equivalent. More explicitly, the real part of the vector tells the number of $A$ firings required at each node, while the imaginary part tells the number of $B$ firings required at each node, in order to get from the initial configuration to $(0,1,0,0,0)$.
\end{example}

\subsection{Future directions}

In this article, we defined the sandpile group of a regular matroid as a quotient group of $\Z^E$. However, there is another approach which can also be used to produce different sets of representatives of $S(R_{10})$. Instead of assigning an integer to each element of the ground set, one can instead work with \emph{orientations} on the ground set. This perspective is equivalent to requiring each node of the pentagon to have either 0 or 1 real chips and either 0 or 1 imaginary chips. 

One difference with this alternate perspective is that any ``firing move'' must maintain the restriction that every node is assigned either 0 or 1 real chips and either 0 or 1 imaginary chips. However, this is not always possible when the only firing moves allowed are $A$, $B$, $(-A)$ and $(-B)$ firings. For example, since every move has both positive and negative coefficients, no firing move would be possible from the all zeros configuration. 

Nevertheless, this problem goes away if we also allow for simultaneous firings (a fact that we will not prove here, but which follows from results in~\cite{BBY}). For example, if we perform all 5 $B$ firings simultaneously, this adds 1+i chips to each node. applying this combined move to the all zeros configuration would maintain the restriction that each node has $0$ or $1$ real chips and $0$ or $1$ complex chips.

\begin{remark}The allowable moves in the \emph{orientation} context above correspond to reversing \emph{signed circuits} or \emph{signed cocircuits} of the matroid. For many more details on this perspective, see~\cite{Gioan}. \end{remark}

\begin{open}
    Theorem~\ref{thm:representatives} describes one set of representatives for the equivalence classes of $S(R_{10})$, but these allow for more than one chip on a node. What is a ``nice'' set of representatives where each node can have at most one real chip and at most one complex chip? 
\end{open}

As discussed in Remark~\ref{rem:mtt}, one interesting property of $S(R_{10})$ is that the size of the group is the same as the number of bases of $R_{10}$. One project for future research would be to understand this relationship more concretely. For example, one could study \emph{simply transitive actions} of $S(R_{10})$ on the bases of $R_{10}$. Such actions are sometimes referred to as \emph{sandpile torsors}. 

There are several such actions known in the literature, which all rely on certain auxiliary information (such as an ordering of the ground set, or a choice of appropriate \emph{signature} for the signed circuits and cocircuits)~\cite{GLV,BBY,BSY,Ding}. 

\begin{open}
    What are some particularly nice simply transitive actions of $S(R_{10})$ on the bases of $R_{10}$? Are there properties of this particular matroid which allow for alternate descriptions of known actions (such as those cited above)? 
\end{open}

In this article, we focused very narrowly on $R_{10}$. As mentioned in the introduction, our motivation for choosing $R_{10}$ in particular is because of a celebrated result of Seymour which showed that every regular matroid can be formed from \emph{1-sums}, \emph{2-sums}, and \emph{3-sums} of graphic matroids, co-graphic matroids, and copies of $R_{10}$~\cite{Seymour}.

Let $M$ and $M'$ be two regular matroids which are represented by matrices $A$ and $A'$ respectively. The \emph{1-sum} of $M$ and $M'$, also called the \emph{direct sum}, is the matroid represented by $\begin{bmatrix}A &  A'\end{bmatrix}$. This matroid is written $M \oplus M'$. It is trivial to show that $S(M \oplus M')\cong S(M) \oplus S(M')$. 

The definition of \emph{2-sum} and \emph{3-sum} are outside the scope of this article, but can be found in~\cite{Oxley}. 

\begin{open}
    Can the sandpile group of the 2-sum of a pair of regular matroids be described in terms of the sandpile groups of the original matroids? What about when 2-sum is replaced with 3-sum? 
\end{open}
\section*{Acknowledgments}
The second author was partially supported by NSF grant DMS-2039316.

\bibliographystyle{alpha}
\bibliography{biblio}

\begin{thebibliography}{DKM15}

\bibitem[BBY19]{BBY}
Spencer Backman, Matthew Baker, and Chi~Ho Yuen.
\newblock Geometric bijections for regular matroids, zonotopes, and ehrhart theory.
\newblock In {\em Forum of Mathematics, Sigma}, volume~7, page e45. Cambridge University Press, 2019.

\bibitem[Big99]{Biggs99}
N.~L. Biggs.
\newblock Chip-firing and the critical group of a graph.
\newblock {\em J. Algebraic Combin.}, 9(1):25--45, 1999.

\bibitem[BSY19]{BSY}
Spencer Backman, Francisco Santos, and Chi~Ho Yuen.
\newblock Extension-lifting bijections for oriented matroids.
\newblock {\em arXiv preprint arXiv:1904.03562}, 2019.

\bibitem[BTW87]{BTW}
Per Bak, Chao Tang, and Kurt Wiesenfeld.
\newblock Self-organized criticality: An explanation of the 1/f noise.
\newblock {\em Physical review letters}, 59(4):381, 1987.

\bibitem[Din23]{Ding}
Changxin Ding.
\newblock Geometric bijections between spanning subgraphs and orientations of a graph.
\newblock {\em Journal of the London Mathematical Society}, 108(3):1082--1120, 2023.

\bibitem[DKM15]{CutsFlows}
Art~M Duval, Caroline~J Klivans, and Jeremy~L Martin.
\newblock Cuts and flows of cell complexes.
\newblock {\em Journal of Algebraic Combinatorics}, 41(4):969--999, 2015.

\bibitem[Gio08]{Gioan}
Emeric Gioan.
\newblock Circuit-cocircuit reversing systems in regular matroids.
\newblock {\em Annals of Combinatorics}, 12(2):171--182, 2008.

\bibitem[GLV05]{GLV}
Emeric Gioan and Michel Las~Vergnas.
\newblock Activity preserving bijections between spanning trees and orientations in graphs.
\newblock {\em Discrete mathematics}, 298(1-3):169--188, 2005.

\bibitem[Kli18]{Klivans}
Caroline Klivans.
\newblock {\em The Mathematics of Chip firing}.
\newblock Chapman \& Hall, 2018.

\bibitem[McD21]{Multijection}
Alex McDonough.
\newblock A family of matrix-tree multijections.
\newblock {\em Algebraic Combinatorics}, 4(5):795--822, 2021.

\bibitem[Mer99]{Merino}
Criel Merino.
\newblock {\em Matroids, the {T}utte polynomial and the chip firing game.}
\newblock PhD thesis, University of Oxford, 1999.

\bibitem[Oxl06]{Oxley}
James~G Oxley.
\newblock {\em Matroid theory}, volume~3.
\newblock Oxford University Press, USA, 2006.

\bibitem[Sey80]{Seymour}
Paul~D. Seymour.
\newblock Decomposition of regular matroids.
\newblock {\em Journal of combinatorial theory, Series B}, 28(3):305--359, 1980.

\bibitem[SW10]{SuWagner}
Yi~Su and David~G Wagner.
\newblock The lattice of integer flows of a regular matroid.
\newblock {\em Journal of combinatorial theory, Series B}, 100(6):691--703, 2010.

\bibitem[Tut58]{Tutte}
William~T. Tutte.
\newblock A homotopy theorem for matroids, i/ii.
\newblock {\em Transactions of the American Mathematical Society}, 88(1):144--174, 1958.

\end{thebibliography}
\end{document}